\newtheorem{thm}{Theorem}[section]
\newtheorem{lem}[thm]{Lemma}
\newtheorem{prop}[thm]{Proposition}
\newtheorem{cor}[thm]{Corollary}
\theoremstyle{definition}
\newtheorem{defn}[thm]{Definition}
\newtheorem{rem}[thm]{Remark}
\def\XXint#1#2#3{{\setbox0=\hbox{$#1{#2#3}{\int}$}
\vcenter{\hbox{$#2#3$}}\kern-.5\wd0}}
\def\bA{\mathbb{A}}
\def\cA{{\mathcal A}}
\def\cC{{\mathcal C}}
\def\cK{{\mathcal K}}
\def\cS{{\mathcal S}}
\def\cP{\mathcal{P}}
\def\cH{\mathcal{H}}
\def\cL{{\mathcal L}}
\def\cM{{\mathcal M}}
\def\Z{{\Bbb Z}}
\def\R{{\Bbb R}}
\def\C{{\Bbb C}}
\def\Q{{\Bbb Q}}
\def\Hom{\text{Hom}}
\def\det{\text{det}\,}
\def\GL{\operatorname{GL}}
\def\SL{\operatorname{SL}}
\def\GL{\operatorname{GL}}
\def\sign{\operatorname{sign}}
\def\cP{\mathcal{P}}
\def\Norm{\operatorname{N}}
\def\Func{\operatorname{Func}}
\def\cN{\mathcal{N}}
\begin{document}
\hoffset=-.4in
\voffset=-0.05in


 \title{Shintani cocycles and $p$-adic measures}
\author{G. Ander Steele}
\maketitle

\begin{abstract}
We apply the constructions of \cite{Ste12} to the Shintani cocycle of Charollois-Dasgupta-Greenberg, obtaining cocycles on arithmetic subgroups of $\GL_n(\Q)$ valued in maps from ``deformation vectors" $\R^n\backslash\Q^n$ to $p$-adic measures. 
\end{abstract}
\section{Introduction}
In \cite{Ste12}, we used Hill's \emph{Shintani cocycle} to construct cocycles $n-1$ cocycles on arithmetic subgroups of $\GL_n(\Q)$, valued in a space of $p$-adic \emph{pseudo-measures}, for any natural number $n$. The main result of \cite{Ste12} is a condition guaranteeing that these cocycles specialize to measures on \emph{non-degenerate} arguments: in other words, the cocycle is measure-valued on inputs with linearly independent first columns.  In this note, we use the Shintani cocycle of Charollois, Dasgupta, and Greenberg to construct cocycles for arithmetic subgroups which always evaluate to $p$-adic \emph{measures}, removing the hypothesis that the argument is non-degenerate at the cost of a more complicated module of values.

\section{Notation and Definitions}
Fix $V$ an $n$-dimensional $\Q$-vector space, and fix a choice of basis $\{b_1,\ldots,b_n\}$.  Let $L_0=\Z b_1+\cdots \Z b_n$.

For each prime $p$, we will write $V_p:=V\otimes_{\Q}\Q_p$, and $L_p:=L\otimes_{\Z}\Z_p$. We write $V_\R:=V\otimes_\Q \R$. Denote by $(V_\R)_+$ the positive orthant $\R_+b_1+\cdots \R_+ b_n$, where $\R_+=(0,\infty)$. 

\subsection{Test functions}
The group of test functions on $V$, denoted $\cS(V)$, is the $\Z$-module of Bruhat-Schwartz functions on the finite adeles $\bA_V^{(\infty)}$. Denote by $\cS(V_p)$ the space of step functions (locally constant, compact support) $f_p:V_p\rightarrow \Z$. For example, if $U\subset V_p$ is a compact open, we will write $[U]$ for the characteristic function of $U$. The group of test functions on $V$ is defined by $\cS(V)=\bigotimes_{p}{}' \cS(V_p)$, where the restricted product means $f_p=[L_p]$ almost everywhere. For each prime $p$, we will denote by $\cS^{(p)}(V)$ the space $\bigotimes_{\ell\neq p}{}' \cS(V_\ell),$ and refer to elements $f'\in\cS^{(p)}(V)$ as a test functions \emph{away from $p$}

Given a lattice $L\subset V$, we define $\cS(L):=\{ f\in\cS(V) : f(v)=0 \text{ for al } v\not\in L\}$. We write $\cS^{(p)}(L)\subset \cS^{(p)}(V)$ for the subgroup of test functions $f'$ away from $p$ such that $f'\otimes[L_p]\in\cS(L)$.

Our convention will be to let $\GL(V)$ act on $V$ on the left. Dually, $\cS(V)$ and $\cS^{(p)}(V)$ are endowed with a \emph{right} $\GL(V)$ action, $(f|\gamma)(v):=f(\gamma v)$. Similarly, we let $\GL(L)$ act on $\cS(L)$ and $\cS^{(p)}(L)$ on the right.

\subsection{Cone functions}
If $v_1,\ldots,v_r$, $r\leq n$, are linearly independent vectors in $V_\R$, we write $C^o(v_1,\ldots,v_r)$ for the set of all positive linear combinations $C^o(v_1,\ldots,v_r)=\{\sum a_i v_i | a_i\in\R_+\}$. $C(v_1,\ldots,v_r)$ will denote the closed cone $C(v_1,\ldots,v_r)=\{\sum a_i v_i | a_i\in\R_+\}$. In either case, we will call the rays in the directions $v_1,\ldots,v_r$ the \emph{extremal rays}. More generally, a \emph{simplicial} cone $C$ is a finite union of open cones (glued along boundaries). A \emph{pointed} cone is a cone that does not contain any lines. A \emph{rational} simplicial cone is a union of pointed open cones generated by rational vectors.

Following Hill \cite{Hi07}, we write $\cK^o_V$ for the abelian group of functions $V_\R-\{0\}\rightarrow \Z$ generated by the characteristic function of rational open cones. We write $\cK_V$ for the group of functions $V_\R\rightarrow\Z$ whose restrictions to $V_\R-\{0\}$ are in $\cK^o_V$. The group $\GL(V)$ acts on $\cK_V$ by
\begin{equation}\label{coneaction}
	(\gamma\cdot \kappa)(v)=\sign(\det\gamma) \kappa(\gamma^{-1} v).
\end{equation}

For example, if $v_1,\ldots,v_n$ are linearly independent vectors of $V$, the rational open cone $C^o(v_1,\ldots,v_n)$ is the set $\{ \sum_{i=1}^n \alpha_i v_i : \alpha_i\in \R_+\}$. Then the characteristic function of this open cone, denoted $[C^o(v_1,\ldots,v_n)]$, is an element of $\cK_V$.

Adopting the convention of \cite{CDG}, we call a set of the form 
\begin{equation*}
	L=\R v_1+\R_+ v_2 +\cdots+\R_+ v_n
\end{equation*}
a \emph{wedge}, where $v_1,\ldots,v_n$ are linearly independent. Define $\cL_V\subset \cK_V$ to be the subgroup generated by the characteristic functions $[L]$ of rational wedges.

\subsection{The Solomon-Hu pairing and pseudo-measures}
In this section, we give an improved exposition of \S4.3 of \cite{Ste12} Fix a lattice $L\subset V$, and for each prime $p$ write $\cM(L_p)=\Hom_{cts}(\cC(L_p,\Q_p),\Q_p)$  for the $\Q_p$-valued measures on $L_p$, i.e. the continuous linear functionals on the space of ($p$-adically) continuous functions from $L_p$ to $\Q_p$. For each $v\in L$, write $\delta_v$ for the Dirac measure $\delta_v(f):=f(v)$. The $\Q_p$-vector space $\cM(L_p)$ is a Banach space under the sup-norm and is a ring under convolution-- the Amice transform identifies $\cM(L_p)$ with the power series ring $\Z_p[[T_1,\ldots,T_n]]\otimes_{\Z_p}\Q_p$. Breaking from the conventions in the literature, we define the space of \emph{pseudo-measures} $\widetilde{\cM}(L_p)$ to be the localization of $\cM(L_p)$ with respect to the multiplicative set generated by $\{1-\delta_v: v\neq 0\}$.

The goal of this section is to construct a bilinear pairing between cone functions
\begin{equation}
	\cS^{(p)}(L)\times\cK_V \longrightarrow \widetilde{\cM}(L_p).
\end{equation}
The construction, a modified version of the Solomon-Hu pairing \cite{SolomonHu}, sends a test function $f$ (supported on $L$) and a cone function $\kappa$, to generating function of the Shintani zeta function $\zeta_{SH}(f,\kappa;s)=\sum_{v\in V^+}f(v)\kappa(v)\Norm(v)^{-s}$.

\begin{defn}
If $C$ is an open pointed-cone, we define the ``$C$-upper half-plane in $V_\C^*:=\Hom_{\C}(V_\C,\C)$ as
\begin{equation}
	\cH_C :=\{ \lambda\in V_\C^*: \lambda(v)\in\cH \text{ for all } v\in\C\}
\end{equation}
\end{defn}

The group of cone functions $\cK_V$ is generated by the characteristic functions of ``open" cones $C^o(v_1,\ldots,v_r)$, $v_1,\ldots,v_r$ linearly independent. Scaling the vectors $v_1,\ldots,v_r$ by positive rational numbers leaves the cone unchanged. If $f$ is a test function, then some positive multiples of the vectors $v_1,\ldots,v_r$ are in fact periods for $f$. Therefore, for each test function $f$, it suffices to consider open cones generated by period vectors $v_1,\ldots,v_r\in L$. Let us fix a test function $f$ and such a cone $C=C^o(v_1,\ldots,v_r)$.f

For each $v\in V$, define $q^v: V_\C^*\longrightarrow \C^\times$ by $q^v(\lambda):=e^{2\pi i \lambda(v)}$. 
\begin{lem}
For all $\lambda\in \cH_C$, the infinite sum
\begin{equation}
	G_{C,f}(\lambda):= \sum_{v\in C} f(v)q^v(\lambda)
\end{equation} 
converges in $\C$ to
\begin{equation}\label{E:rational}
	G_{C,f}(\lambda) = \frac{1}{1-q^{v_1}(\lambda)}\cdots\frac{1}{1-q^{v_r}(\lambda)} \sum_{v\in\cP} f(v)q^{v}(\lambda),
\end{equation}
where $\cP=\{ \sum_{i=1}^r x_i v_i : x_i\in (0,1]\cap\Q\}$. (N.B. the sum is finite, since $f$ is supported on $L$)
\end{lem}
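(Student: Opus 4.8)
The plan is to prove convergence and the closed-form evaluation simultaneously by decomposing the cone $C = C^o(v_1,\dots,v_r)$ into translates of the fundamental parallelepiped $\cP$ along the semigroup generated by $v_1,\dots,v_r$. Concretely, since $v_1,\dots,v_r$ are periods for $f$ (by the reduction made just before the statement) and form an $\R$-basis of the span, every $v \in C$ can be written uniquely as $v = w + \sum_{i=1}^r n_i v_i$ with $w \in \cP$ and $n_i \in \Z_{\geq 0}$; one checks this is a genuine bijection $C \cap (\text{support of } f) \longleftrightarrow \cP \times \Z_{\geq 0}^r$ after noting $f(v) = f(w)$ by periodicity. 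Under this reindexing, $q^v(\lambda) = q^w(\lambda) \prod_i (q^{v_i}(\lambda))^{n_i}$, so the sum formally factors as $\left(\sum_{w \in \cP} f(w) q^w(\lambda)\right) \prod_{i=1}^r \left(\sum_{n_i \geq 0} (q^{v_i}(\lambda))^{n_i}\right)$.

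First I would verify the analytic input that makes this formal manipulation legitimate. For $\lambda \in \cH_C$, each $v_i \in C$ (it lies on an extremal ray, in the closure — here I should be slightly careful, as $\cH_C$ is defined via the open cone, so I would either note $\lambda(v_i) \in \overline{\cH}$ by taking limits, or better, observe that because the $v_i$ are among the generators, $\lambda(v_i)$ has strictly positive imaginary part since $v_i$ itself, or a point near it in $C$, forces this; in fact $v_i \in C$ for the open cone when $r \geq 2$ only as a limit, so the cleanest route is: pick any $v \in C$; then $\Real(i\lambda(v)) < 0$, hence $|q^v(\lambda)| < 1$, and applying this to $v = v_i + \epsilon(v_1 + \dots + v_r)$ and letting $\epsilon \to 0^+$ gives $|q^{v_i}(\lambda)| \leq 1$; to get strict inequality use that $\lambda(v_i)$ cannot be real for $\lambda$ in the open condition — I would spell this out). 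Granting $|q^{v_i}(\lambda)| < 1$, the geometric series $\sum_{n \geq 0}(q^{v_i}(\lambda))^n = \frac{1}{1 - q^{v_i}(\lambda)}$ converges absolutely, and the finite sum over $\cP$ is bounded, so the product converges absolutely; absolute convergence then justifies the rearrangement of the original sum $\sum_{v \in C} f(v) q^v(\lambda)$ into the factored form, yielding exactly \eqref{E:rational}.

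The main obstacle is the boundary subtlety in the bijection between $C$ and $\cP \times \Z_{\geq 0}^r$: with $C$ the open cone and $\cP = \{\sum x_i v_i : x_i \in (0,1] \cap \Q\}$, one must confirm that the half-open parallelepiped $\cP$ is an exact fundamental domain for the translation action of $\Z_{\geq 0}^r$ on $C$ — every element of $C$ hits exactly one $\cP$-translate, with no double-counting on faces and no omissions. This is where the choice of half-open intervals $(0,1]$ rather than $[0,1)$ or $[0,1]$ matters, and I would check it by writing the coordinates $a_i > 0$ of $v \in C$ in the basis $v_i$ and taking $n_i = \lceil a_i \rceil - 1$, $x_i = a_i - n_i \in (0,1]$. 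A secondary, more routine point is confirming $f(v) = f(w)$: this holds because $v - w = \sum n_i v_i$ is an integral combination of period vectors of $f$, so $f$ is invariant under this translation. Once the bijection and periodicity are nailed down, the rest is the absolute-convergence bookkeeping sketched above, and the finiteness of $\sum_{v \in \cP} f(v) q^v(\lambda)$ follows since $\cP \cap L$ is finite and $f$ is supported on $L$.
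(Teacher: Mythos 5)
Your main construction is exactly the paper's argument, just run in the opposite direction: the paper expands the right-hand side of \eqref{E:rational} as a product of geometric series, while you decompose $C$ into the $\Z_{\geq 0}^r$-translates of the half-open parallelepiped $\cP$ (via $n_i=\lceil a_i\rceil-1$, $x_i=a_i-n_i\in(0,1]$), use periodicity of $f$ under the $v_i$, and resum. That part, including the absolute-convergence bookkeeping and the finiteness of the sum over $\cP$, is fine and matches the intended proof.

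The step that does not work as you propose is the strict inequality $|q^{v_i}(\lambda)|<1$. You suggest obtaining it from the claim that $\lambda(v_i)$ cannot be real when $\lambda\in\cH_C$; with $\cH_C$ defined through the \emph{open} cone, that claim is false, so no amount of spelling out will rescue it. For instance, with $r=n=2$ and $v_1,v_2$ a basis, take $\lambda$ with $\lambda(v_1)=1$ and $\lambda(v_2)=i$: then $\operatorname{Im}\lambda(a_1v_1+a_2v_2)=a_2>0$ for all $a_1,a_2>0$, so $\lambda\in\cH_C$, yet $|q^{v_1}(\lambda)|=1$; moreover, for $f$ identically $1$ on $L$ the series $\sum_{v\in C}f(v)q^v(\lambda)$ genuinely diverges (the terms $q^{a_1v_1+v_2}(\lambda)$, $a_1\in\Z_{>0}$, all have the same absolute value and do not sum). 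So the lemma is only correct under the reading that $\lambda(v)\in\cH$ for every nonzero $v$ in the \emph{closed} cone, equivalently $\operatorname{Im}\lambda(v_i)>0$ for each generator $v_i$; the paper's definition of $\cH_C$ contains a typo (``for all $v\in\C$'') and its one-line proof silently assumes this stronger condition as well. The correct fix in your write-up is therefore not to derive strict inequality from the open-cone condition, but to state the hypothesis $\operatorname{Im}\lambda(v_i)>0$ (or adopt the closed-cone definition of $\cH_C$); with that in place, $|q^{v_i}(\lambda)|<1$ is immediate and the rest of your argument goes through verbatim.
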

\begin{proof}
If $\lambda\in\cH_C$, then for all $v\in C$ $|q^v(\lambda)|<1$. Expanding the right-hand side of (\ref{E:rational}) as a product of geometric series gives the result.
\end{proof}
Therefore, $G_{C,f}$ defines an analytic function on $\cH_C$ which extends to a meromorphic function on $V_\C^*$. 
By identifying $q^v$ with the Dirac distribution $\delta_v$, we will interpret $G_{C,f}$ as a pseudo-measure on $L_p$, for each prime $p$. To make this precise, consider the ring $\Q[q^v]_{v\in L}$. It is isomorphic to the polynomial ring $\Q[q^{w_1},\ldots, q^{w_n}]$, where $w_1,\ldots,w_n$ is  a basis of $L$.  For each prime $p$, we define a homomorphism $\varphi_{L,p} :\Q[q^v]_{v\in L}\longrightarrow \cM(L_p)$ by sending $\varphi_{L,p}(q^v)=\delta_v$. Localizing $\Q[q^v]_{v\in L}$ with respect to the multiplicative set generated by $\{1-q^v: 0\neq v\in L\}$ induces a ring homomorphism 
\begin{equation}
	\varphi_{L,p} :(\Q[q^v]_{v\in L})_{(1-q^v)}\longrightarrow \widetilde{\cM}(L_p).
\end{equation}

We state without proof the following version of the Solomon-Hu pairing \cite{SolomonHu}; see also Hill's exposition \cite{Hi07}.
\begin{prop}[\cite{SolomonHu}]
The map $(C,f)\mapsto \varphi_{L,p}G_{C,f}$ induces a bilinear homomorphisms $\langle ~,~\rangle: \cK_V/\cL_V\times\cS(L)\longrightarrow \widetilde\cM(L_p)/\Z\delta_0$. \end{prop}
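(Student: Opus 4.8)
The plan is to show that the assignment $(C,f)\mapsto \varphi_{L,p}G_{C,f}$ is well-defined on the quotient $\cK_V/\cL_V\times\cS(L)$, and then that it factors through $\Z\delta_0$ on the target. First I would address bilinearity and well-definedness over $\cK_V$: the group $\cK_V$ is generated by characteristic functions of rational open cones, but a given element of $\cK_V$ admits many presentations as a $\Z$-linear combination of such cones. So one must check that if $\sum_j n_j [C_j^o] = 0$ in $\cK_V$, then $\sum_j n_j \varphi_{L,p}G_{C_j,f} = 0$ in $\widetilde{\cM}(L_p)$. The key tool here is the explicit rational-function formula \eqref{E:rational}: for a fixed $f$ supported on $L$, each $G_{C,f}$ lies in the localized polynomial ring $(\Q[q^v]_{v\in L})_{(1-q^v)}$, and the association $[C^o]\mapsto G_{C,f}$ is the ``solid angle / generating function'' map, which is known (Brion, Lawrence--Varchenko; see Hill \cite{Hi07}) to be additive on $\cK_V$: a relation among cone characteristic functions forces the corresponding relation among the meromorphic generating functions, because two meromorphic functions on $V_\C^*$ that agree as formal sums over lattice points in a common cone of convergence must agree. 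Applying the ring homomorphism $\varphi_{L,p}$ then transports this relation into $\widetilde{\cM}(L_p)$.

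Next I would handle the $f$-variable: bilinearity in $f$ is immediate from the definition $G_{C,f}(\lambda)=\sum_{v\in C}f(v)q^v(\lambda)$, which is visibly additive in $f$, and respects the lattice-support condition since $\cS(L)$ is closed under addition. One subtlety is that the cone $C$ used in \eqref{E:rational} should be generated by period vectors of $f$; when combining $G_{C,f_1}$ and $G_{C,f_2}$ one passes to a common refinement of periods, which does not change the value of either generating function (rescaling generators by positive rationals leaves the open cone, hence the sum, unchanged). So the pairing is $\Z$-bilinear on $\cS^{(p)}(L)\times\cK_V$ valued in $\widetilde{\cM}(L_p)$.

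Then I would verify that wedges map into the ambiguity $\Z\delta_0$. If $W=\R v_1+\R_+v_2+\cdots+\R_+v_n$ is a rational wedge, decompose it (away from the hyperplane $\R v_2+\cdots+\R v_n$) as the disjoint union, up to lower-dimensional pieces, of the open cone $C^o(v_1,v_2,\ldots,v_n)$ and the open cone $C^o(-v_1,v_2,\ldots,v_n)$, together with the ``middle'' cone $C^o(v_2,\ldots,v_n)$. Summing the generating functions: the factor $\tfrac{1}{1-q^{v_1}}$ from the first cone and $\tfrac{1}{1-q^{-v_1}}$ from the second combine, via the identity $\tfrac{1}{1-q^{v_1}}+\tfrac{1}{1-q^{-v_1}}=1$, to telescope the $v_1$-direction, leaving a contribution supported on the hyperplane which, after also accounting for the boundary/middle cone, collapses to a multiple of $q^0\mapsto\delta_0$. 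Hence $G_{W,f}\equiv (\text{integer})\cdot\delta_0$ modulo genuine measures, so $[W]$ pairs to $0$ in $\widetilde{\cM}(L_p)/\Z\delta_0$; since wedges generate $\cL_V$, the pairing descends to $\cK_V/\cL_V$.

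I expect the main obstacle to be the first step: rigorously justifying that a $\Z$-linear relation among characteristic functions of open rational cones in $\cK_V$ implies the \emph{same} relation among the meromorphic functions $G_{C,f}$, and in particular that $\varphi_{L,p}$ is well-defined on the localization (i.e. that the denominators $1-q^{v}$ for $0\neq v\in L$ really do become invertible in $\widetilde{\cM}(L_p)$ — this is where the pseudo-measure localization is designed to work, since $1-\delta_v$ is inverted there). The cleanest route is to invoke the additivity of the generating-function valuation on polyhedral cones as in \cite{Hi07} rather than reprove it; granting that, the rest is a routine diagram chase with $\varphi_{L,p}$.
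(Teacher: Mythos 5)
The paper itself gives no argument here: the proposition is explicitly ``stated without proof,'' with the burden delegated to \cite{SolomonHu} and to Hill's exposition \cite{Hi07}. So there is no internal proof to compare you against; what can be assessed is whether your sketch correctly reconstructs the standard argument. Its architecture does: bilinearity in $f$ is immediate from the $q$-expansion; the descent modulo $\cL_V$ via the decomposition of a wedge into $C^o(v_1,\ldots,v_n)$, $C^o(-v_1,v_2,\ldots,v_n)$ and $C^o(v_2,\ldots,v_n)$, with the identity $\frac{1}{1-q^{v_1}}+\frac{1}{1-q^{-v_1}}=1$ doing the telescoping, is exactly right (in fact the three generating functions sum to $0$ on the nose when $f$ is periodic in $v_1$, not just to a multiple of $\delta_0$; the $\Z\delta_0$ ambiguity is really there to absorb the arbitrary value of a cone function at the origin, which the open-cone generating functions never see); and $\varphi_{L,p}$ extends to the localization by the universal property, since the $1-\delta_v$ are inverted in $\widetilde\cM(L_p)$ by construction.

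The one genuine weak point is your justification of well-definedness in the cone variable. The claim that a relation $\sum_j n_j[C_j^o]=0$ forces $\sum_j n_j G_{C_j,f}=0$ ``because two meromorphic functions that agree as formal sums over lattice points in a common cone of convergence must agree'' does not work in general: the cones appearing in a relation need not admit any common region of absolute convergence. For instance, two different simplicial subdivisions of an open half-space yield a relation among pointed open cones whose union is the whole half-space, and for any $\lambda$ the sum over at least one of these cones fails to converge absolutely (the terms do not decay along directions parallel to the bounding hyperplane). Handling exactly this situation is the actual content of the additivity (valuation) theorem for cone generating functions proved by Solomon--Hu, Hill, and in the Lawrence--Varchenko/Khovanskii--Pukhlikov tradition. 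Since you ultimately propose to invoke that theorem rather than your convergence heuristic, your outline lands where the paper does --- citing the known result --- but you should strike the heuristic, or replace it by an argument that first reduces to relations supported in a common pointed cone, as it is not a valid proof of the key step.
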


\begin{rem}
By taking the product of test functions away from $p$ with $[L_p$, we get a homomorphism $\cS^{(p)}(L)\xrightarrow{\otimes[L_p]}\cS(L)$. This induces a bilinear homomorphism $\cK_V/\cL_V\times\cS^{(p)}(L)\longrightarrow\widetilde{\cM(L_p)}/\Z\delta_0$.
\end{rem}

We record two easy lemmas about this Solomon-Hu pairing.
\begin{lem}\label{L:G-action}
For all $\gamma\in \GL^+(L)$, $\langle \gamma\cdot \kappa, f|\gamma^{-1}\rangle = \gamma \langle \kappa,f\rangle$.
\end{lem}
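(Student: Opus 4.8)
The plan is to reduce the claimed identity to a calculation on generating functions $G_{C,f}$ and then transport it through the homomorphism $\varphi_{L,p}$, using the compatibility of $\varphi_{L,p}$ with the $\GL^+(L)$-actions on source and target. First I would observe that both sides of the claimed equality are bilinear in $\kappa$ and $f$, so it suffices to verify the identity when $\kappa = [C^o(v_1,\ldots,v_r)]$ is the characteristic function of a rational open cone and $f$ is supported on $L$ (and, as in the paragraph preceding the definition of $q^v$, we may take $v_1,\ldots,v_r \in L$ to be period vectors for $f$). For such $\kappa$, by definition $\langle \kappa, f\rangle = \varphi_{L,p} G_{C,f}$ modulo $\Z\delta_0$, so the whole lemma comes down to checking that $G_{\gamma C, f|\gamma^{-1}}$ and $\gamma \cdot G_{C,f}$ agree as elements of $(\Q[q^v]_{v\in L})_{(1-q^v)}$, where on the right $\gamma$ acts on $q^v$ by $q^v \mapsto q^{\gamma v}$ — the linear-algebraic incarnation of the action transported along $\varphi_{L,p}$.

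The core computation is then immediate from the series definition $G_{C,f}(\lambda) = \sum_{v\in C} f(v)\, q^v(\lambda)$. For $\gamma \in \GL^+(L)$ we have $\sign(\det\gamma) = 1$, so the cone action \eqref{coneaction} gives $\gamma\cdot[C^o(v_1,\ldots,v_r)] = [C^o(\gamma v_1,\ldots,\gamma v_r)]$, i.e.\ $\gamma C = C^o(\gamma v_1,\ldots,\gamma v_r)$, and the $\gamma v_i$ are period vectors for $f|\gamma^{-1}$. Hence
\begin{equation*}
	G_{\gamma C,\, f|\gamma^{-1}} = \sum_{w\in \gamma C} (f|\gamma^{-1})(w)\, q^w = \sum_{w \in \gamma C} f(\gamma^{-1} w)\, q^w = \sum_{v\in C} f(v)\, q^{\gamma v},
\end{equation*}
where the last step is the substitution $w = \gamma v$, a bijection $C \to \gamma C$. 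The right-hand side is visibly the image of $G_{C,f} = \sum_{v\in C} f(v) q^v$ under the ring map $q^v \mapsto q^{\gamma v}$, which is exactly $\gamma\cdot G_{C,f}$. Applying $\varphi_{L,p}$ and using that $\varphi_{L,p}(q^{\gamma v}) = \delta_{\gamma v} = \gamma\cdot\delta_v$ intertwines the two actions, we get $\langle \gamma\cdot\kappa, f|\gamma^{-1}\rangle = \gamma\langle\kappa,f\rangle$ in $\widetilde{\cM}(L_p)/\Z\delta_0$.

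The only genuine subtlety — and the step I would treat most carefully — is the bookkeeping around the quotient by $\Z\delta_0$ and the passage to the localization, i.e.\ checking that the identity of formal generating functions is legitimately computed \emph{before} inverting the elements $1-q^v$ (so that the rational-function identity \eqref{E:rational} is never actually needed), and that $\gamma\cdot\delta_0 = \delta_0$ so the $\Z\delta_0$-ambiguity is $\gamma$-stable. One should also note that $\gamma$ permutes the multiplicative set $\{1-q^v : 0\neq v\in L\}$ (since $\gamma \in \GL(L)$), so the action descends to the localization and $\varphi_{L,p}$ remains $\GL^+(L)$-equivariant there; with that in hand the argument above is complete. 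I expect no real obstacle beyond this formal care, since the heart of the matter is the one-line reindexing $w = \gamma v$ in the defining sum.
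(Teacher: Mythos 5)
Your proof is correct and matches the paper's own argument: the heart in both cases is the reindexing $w=\gamma v$ in the $q$-expansion, turning $\sum_v \kappa(\gamma^{-1}v)f(\gamma^{-1}v)q^v$ into $\gamma\cdot\sum_w \kappa(w)f(w)q^w$. Your extra care about the localization, the equivariance of $\varphi_{L,p}$, and the $\Z\delta_0$ ambiguity is sound but not a different route --- the paper simply leaves that bookkeeping implicit.
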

\begin{proof}
From the $q$-expansions, we have
\begin{align}
	\langle \kappa|\gamma,f|\gamma\rangle = \sum_{v\in V} \kappa(\gamma^{-1} v) f(\gamma^{-1} v) q^v\\
	=\sum_{w\in V} \kappa(w) f(w) q^{\gamma w}\\
	=\gamma \cdot\sum_{w\in V}\kappa(w)f(w) q^w\\
	=\gamma\cdot \langle \kappa,f\rangle.
\end{align}
\end{proof}
\begin{lem} 
If $\kappa=[C^o(v_1,\ldots,v_r)]$ with $v_1\ldots,v_r\in L$ periods of $f$, then
\begin{equation}\label{E:pseudo-measure}
	\langle \kappa, f\rangle  = \frac{1}{1-\delta_{v_1}}\cdots\frac{1}{1-\delta_{v_r}} \sum_{v\in \cP} f(v) \delta_v
\end{equation}
\end{lem}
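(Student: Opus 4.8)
The plan is to observe that this lemma is essentially a restatement of the previous lemma (equation (\ref{E:rational})) pushed through the ring homomorphism $\varphi_{L,p}$, so the real content lies in checking well-definedness rather than in any genuine computation. First I would recall that, by the construction of the Solomon-Hu pairing above, the pairing $\langle\kappa,f\rangle$ with $\kappa=[C^o(v_1,\ldots,v_r)]$ is by definition $\varphi_{L,p}(G_{C,f})$, where $G_{C,f}$ is regarded as an element of the localized ring $(\Q[q^v]_{v\in L})_{(1-q^v)}$ via the identification $q^v(\lambda)\leftrightarrow q^v$. This identification is legitimate precisely because $q^{v+w}=q^vq^w$ and $q^0=1$, so the meromorphic function $G_{C,f}$ on $V_\C^*$ and the corresponding element of the fraction ring carry the same information.

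Next I would invoke the hypothesis that $v_1,\ldots,v_r\in L$ are periods of $f$. Since the $v_i$ are linearly independent they are in particular nonzero, so each $1-q^{v_i}$ lies in the multiplicative set used to localize, and the product $\tfrac{1}{1-q^{v_1}}\cdots\tfrac{1}{1-q^{v_r}}$ is a legitimate element of $(\Q[q^v]_{v\in L})_{(1-q^v)}$. The numerator $\sum_{v\in\cP}f(v)q^v$ is a finite sum lying in $\Q[q^v]_{v\in L}$: the set $\cP$ is bounded, $f$ vanishes off the lattice $L$, and $\cP\cap L$ is finite, so only finitely many terms are nonzero and each surviving $q^v$ has $v\in L$. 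Thus equation (\ref{E:rational}), read in the fraction ring, asserts exactly that $G_{C,f}=\tfrac{1}{1-q^{v_1}}\cdots\tfrac{1}{1-q^{v_r}}\sum_{v\in\cP}f(v)q^v$ as elements of $(\Q[q^v]_{v\in L})_{(1-q^v)}$.

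Finally I would apply $\varphi_{L,p}$, which is a ring homomorphism on the localized ring sending $q^v\mapsto\delta_v$; hence it sends $\tfrac{1}{1-q^{v_i}}\mapsto\tfrac{1}{1-\delta_{v_i}}$ and $\sum_{v\in\cP}f(v)q^v\mapsto\sum_{v\in\cP}f(v)\delta_v$, yielding (\ref{E:pseudo-measure})---and this identity holds already at the level of $\widetilde{\cM}(L_p)$, before passing to the quotient by $\Z\delta_0$. The only point requiring genuine care, which I would state most explicitly, is the well-definedness just described: that the numerator really is a finite element of $\Q[q^v]_{v\in L}$ so that $\varphi_{L,p}$ applies, and that passing from the analytic identity of Lemma~(\ref{E:rational}) to the identity in the fraction ring is valid. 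Once these are pinned down, the lemma is immediate from functoriality of $\varphi_{L,p}$.
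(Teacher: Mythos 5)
Your proposal is correct and matches the paper's intent: the paper's proof is simply ``This follows directly from the definition,'' and your argument is exactly the definition-chasing that statement compresses --- identifying $\langle\kappa,f\rangle$ with $\varphi_{L,p}(G_{C,f})$, reading equation (\ref{E:rational}) in the localized ring $(\Q[q^v]_{v\in L})_{(1-q^v)}$, and applying the ring homomorphism $q^v\mapsto\delta_v$. Your explicit attention to the finiteness of the numerator and the legitimacy of the localization is a welcome elaboration but not a departure from the paper's route.
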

\begin{proof}
This follows directly from the definition.
\end{proof}

\begin{rem}
It should come as no surprise that if $\langle \kappa, f'\rangle $ is actually a measure, then it has as moments special values of Shintani zeta functions. See, for example, \cite{Ste12}, Proposition 4.10. Finding conditions for these pseudo-measures to be measures is the main result of \cite{Ste12}; we state the result in the next section and present simplified proofs in the appendix.
\end{rem}

\subsection{Measure criteria}
Given a test function $f\in\cS(V_\ell)$, a non-zero $v\in V$ and any $w\in V$, write $f_{v,w}\in \cS(\Q_\ell)$ for the (a fortiori) test function $f_{v,w}:\Q_\ell\rightarrow \Z$ 
\begin{equation}
	(f_{v,w})(t):=f(w+v t ), \text{ for all } t\in\Q_\ell.
\end{equation}
Similarly, we define $f'_{v,w}\in \cS(\Q^{(p)})$ and $f_{v,w}:\in\cS(\Q)$.

\begin{defn}[{\bf Vanishing Hypothesis}]
Let $v\in V$ be a non-zero vector. We will say a test function $f'$ \emph{satisfies the {Vanishing Hypothesis} for $v$} if $h^{(p)}(f_{v,w}' )=0$ for all $w\in V$.
\end{defn}

We remark that $h^{(p)}(f_{v,w}')$ depends only on $w \mod \langle v\rangle$ by the translation invariance of Haar. 
\begin{thm}\label{T:key_technical}
Let $C$ be a simplicial cone with extremal rays $v_1,\ldots,v_r$. The pseudo-measure $\mu_{f',C}\in\widetilde{\cM}(L_p)$ is a measure if $f'$ satisfies the vanishing hypothesis for $v_1,\ldots,v_r$.
\end{thm}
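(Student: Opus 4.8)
The plan is to turn the statement into a divisibility assertion inside the measure algebra $\cM(L_p)$ and to verify that divisibility one extremal ray at a time.

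\emph{Reductions.} First I would reduce to $\kappa=[C]$ with $C=C^o(v_1,\ldots,v_r)$ an \emph{open} cone: writing the characteristic function of a general simplicial cone as a $\Z$-linear combination of the characteristic functions of the open faces $C^o(v_i:i\in S)$, $S\subseteq\{1,\ldots,r\}$, and using bilinearity of the Solomon--Hu pairing, it is enough to treat open cones, and if $f'$ satisfies the Vanishing Hypothesis for $v_1,\ldots,v_r$ it satisfies it for every subset. After rescaling the $v_i$ as in the paragraph preceding (\ref{E:rational}), I may assume $v_1,\ldots,v_r\in L$ are periods of $f:=f'\otimes[L_p]$, so that by (\ref{E:pseudo-measure})
\begin{equation*}
	\mu_{f',C}\;=\;\frac{1}{1-\delta_{v_1}}\cdots\frac{1}{1-\delta_{v_r}}\,N,\qquad N:=\sum_{v\in\cP}f(v)\,\delta_v .
\end{equation*}
Here $N$ is an honest measure --- a finite $\Z$-linear combination of Dirac measures supported on $\cP\cap\supp f\subset L$ --- so the theorem becomes the assertion that $N$ is divisible by $\prod_{i=1}^{r}(1-\delta_{v_i})$ in $\cM(L_p)$.

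\emph{One ray at a time.} Next I would record that $1-\delta_{v_1},\ldots,1-\delta_{v_r}$ form a regular sequence in $\cM(L_p)\cong\Z_p[[T_1,\ldots,T_n]]\otimes_{\Z_p}\Q_p$: killing $1-\delta_{v_i}$ identifies $\cM(L_p)$ with the measure algebra of a finitely generated $\Z_p$-module of $\Z_p$-rank $n-1$, and linear independence of the $v_i$ forces the Krull dimension to drop by one at each stage; as $\cM(L_p)$ is Cohen--Macaulay, an element divisible by every $1-\delta_{v_i}$ is then divisible by their product. So it suffices to show, for each fixed $i$, that $N$ is divisible by $1-\delta_{v_i}$. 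Writing $v_i=d\,v_i'$ with $v_i'$ primitive in $L$, we have $1-\delta_{v_i}=(1-\delta_{v_i'})\bigl(1+\delta_{v_i'}+\cdots+\delta_{v_i'}^{\,d-1}\bigr)$, and the second factor --- of total mass $d\neq0$ --- is a unit of $\cM(L_p)$; so it is enough to divide $N$ by $1-\delta_{v_i'}$. Completing $v_i'$ to a $\Z_p$-basis of $L_p$ and passing to Amice transforms, $1-\delta_{v_i'}$ becomes a unit times the first coordinate, and divisibility by it is equivalent to the vanishing of the pushforward $\pi_{*}N$ under the projection $\pi\colon L_p\to L_p/\Z_p v_i'$.

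\emph{Where the hypothesis enters, and the main obstacle.} The pushforward $\pi_{*}N=\sum_{v\in\cP}f(v)\,\delta_{\pi(v)}$ is a finite sum of Diracs on $L_p/\Z_p v_i'$, so it vanishes precisely when every fibre-sum $\sum_{v\in\cP,\;\pi(v)=y}f(v)$ vanishes. Linear independence of $v_1,\ldots,v_r$ shows that such a fibre is obtained by fixing the coordinates $x_j$ ($j\neq i$) of $v=\sum_j x_j v_j$ and letting $x_i$ range over the elements of $(0,1]\cap\Q$ in a single class modulo $\tfrac1d\Z_{(p)}$, so each fibre-sum has the shape $\sum_{x_i}f(w+x_i v_i)$ with $w=\sum_{j\neq i}x_j v_j$ held fixed. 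The key --- and, I expect, the hardest --- step is to identify this sum, up to a positive rational normalizing constant, with $h^{(p)}(f'_{v_i,w})$; this is a ``sum over a fundamental domain equals normalized Haar integral'' computation, and carrying it out precisely means tracking the adelic normalization and the interplay between the finite-adelic test function $f'$ and the factor $[L_p]$ --- in particular the $p$-integrality built into the condition $x_i\in(0,1]\cap\Q$ --- which is exactly the bookkeeping of \cite{Ste12}. Granting this identification, the Vanishing Hypothesis for $v_i$ makes every fibre-sum zero, so $\pi_{*}N=0$, $N$ is divisible by each $1-\delta_{v_i}$, hence by their product, and therefore $\mu_{f',C}\in\cM(L_p)$ is a measure. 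A secondary, routine point to pin down is the regular-sequence claim above, which is what licenses passing from divisibility by each factor to divisibility by the product.
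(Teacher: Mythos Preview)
Your route is genuinely different from the paper's and is essentially sound, but there is one real slip. The assertion that $1+\delta_{v_i'}+\cdots+\delta_{v_i'}^{\,d-1}$ is a unit in $\cM(L_p)$ ``because its total mass is $d\neq0$'' is false: $\cM(L_p)\cong\Z_p[[T_1,\ldots,T_n]]\otimes_{\Z_p}\Q_p$ is \emph{not} a local ring, and for instance $p+T_1$ has nonzero augmentation yet is not invertible (its formal inverse has unbounded denominators). What you actually need is $p\nmid d$. This \emph{can} be arranged, because $f'$ is a test function away from $p$: its periods in the ray $\Q_{>0}v_i'$ include $Nv_i'$ for some positive integer $N$ prime to $p$, and $Nv_i'\in L$ is automatically a period of $[L_p]$, so one may take $v_i=Nv_i'$. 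With that choice your reduction to primitive $v_i'$ goes through. The regular-sequence deduction is then correct, though the clean reason is that $\cM(L_p)$ is a UFD (a localization of a regular local ring) in which the $1-\delta_{v_i'}$ are irreducible and pairwise non-associate; Cohen--Macaulayness by itself does not yield ``divisible by each $\Rightarrow$ divisible by the product.''

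The paper bypasses this commutative algebra entirely. It extends $v_1,\ldots,v_r$ to a basis $v_1,\ldots,v_n$ of $V$, sets $U_p=\Z_pv_1\oplus\cdots\oplus\Z_pv_n\subset L_p$, and decomposes $[L_p]=\sum_u[u+U_p]$ over coset representatives. For each coset one works inside $\cM(U_p)$, where the Amice transform sends $1-\delta_{\alpha_iv_i}$ to a unit times $T_i$; divisibility of the numerator by $\prod_iT_i$ is then simply the vanishing of its restriction to each hyperplane $T_i=0$, and a telescoping $q$-series computation (Proposition~\ref{P:q-series}) identifies that restriction with the Haar integrals $h(f_{v_i,w})$. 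This is exactly the ``hardest step'' you flagged, but in the $U_p$-coordinates the fibre is all of $(0,1]\cap\Q$ rather than a congruence class modulo $\tfrac1d\Z_{(p)}$, so the bookkeeping is lighter. Your argument is more conceptual once the algebra is in place; the paper's coset trick is more elementary and sidesteps any delicate questions about units or regular sequences in the non-local ring $\cM(L_p)$.
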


\section{Shintani cocycles}
\subsection{The Charollois-Dasgupta-Greenberg cocycle}
We briefly describe the Shintani cocycle of Charollois-Dasgupta-Greenberg.  Fix a basis of $V\cong \Q^n$ and an auxiliary vector $Q\in V_{irr}:=\R^n-\Q^n$. For vectors $v_1,\ldots,v_n$, the $Q$-deformed cone $c_Q(v_1,\ldots,v_n)\in\cK_V$ is the cone function
\begin{equation}
	c_Q(v_1,\ldots,v_n) (w):=\begin{cases} \lim_{\varepsilon\rightarrow 0^+} [C^o(v_1,\ldots,v_n)](w+\varepsilon Q) & \text{ if $v_1,\ldots, v_n$ are linearly independent,}\\ 0 & \text{ otherwise.}\end{cases}
\end{equation}
For any $\GL(V)$-module $A$, let $\mathcal{N}:=\Func(V_{irr},A)$ be the module of maps from ``deformation vectors" $Q$ to $A$. It is a $\GL_n(\Q)$-module under the action $(\gamma\cdot F)(Q)=\gamma\cdot F(\gamma^{-1}Q)$. The Charollois-Dasgupta-Greenberg cocycle $\Psi_{CDG}:\GL_n(\Q)^n\longrightarrow\cN(\cK_V/\cL_v)$ is defined by 
\begin{equation}
	\Psi_{CDG}(\alpha_1,\ldots,\alpha_n)(Q) =\sign(\det(\alpha_1 e_1 \ldots \alpha_n e_1)) c_Q(\alpha_1e_1,\ldots,\alpha_n e_1).
\end{equation}

\begin{thm}[Theorem 1.6 of \cite{CDG}]
The map $\Psi_{CDG}$ defines an $n-1$ cocycle
\begin{equation}
\Psi_{CDG}\in Z^{n-1}(\GL_n(\Q),\mathcal{N}(\cK_V/\cL_V)).
\end{equation}
\end{thm}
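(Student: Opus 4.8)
The plan is to show that $\Psi_{CDG}$ satisfies the cocycle relation $d\Psi_{CDG} = 0$ in the inhomogeneous bar complex, i.e.
\begin{equation*}
\sum_{i=0}^{n} (-1)^i \Psi_{CDG}(\alpha_0,\ldots,\widehat{\alpha_i},\ldots,\alpha_n) = 0
\end{equation*}
as elements of $\cN(\cK_V/\cL_V)$, where the outer and inner terms carry the $\GL_n(\Q)$-action described above (the $i=0$ term being $\alpha_0 \cdot \Psi_{CDG}(\alpha_1,\ldots,\alpha_n)$ and the $i=n$ term using the right action implicit in the homogeneous formulation). Because everything is defined pointwise in the deformation vector $Q$, I would first reduce to a statement for each fixed $Q \in V_{irr}$: it suffices to prove that the function
\begin{equation*}
(g_1,\ldots,g_n) \mapsto \sign(\det(g_1 e_1 \cdots g_n e_1))\, [C^o(g_1 e_1,\ldots,g_n e_1)] \bmod \cL_V
\end{equation*}
is a homogeneous $(n-1)$-cocycle valued in $\cK_V/\cL_V$, with the caveat that $Q$ itself transforms under $\gamma$ (this is exactly why the target is the induced module $\cN$, and is what makes the pointwise reduction legitimate rather than a triviality).

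The core of the argument is the classical \emph{additivity} (or inclusion–exclusion) relation for characteristic functions of simplicial cones, in the form that Hill and Charollois–Dasgupta–Greenberg use. Concretely, given $n+1$ vectors $w_0,\ldots,w_n$ in general position (which, after the $Q$-deformation, is the situation we may assume), the signed sum of the characteristic functions of the $n+1$ cones spanned by the $n$-element subsets,
\begin{equation*}
\sum_{i=0}^n (-1)^i \sign(\det(\cdots \widehat{w_i} \cdots))\, [C^o(w_0,\ldots,\widehat{w_i},\ldots,w_n)],
\end{equation*}
is supported on the hyperplanes through the $w_j$ and hence lies in $\cL_V$ — it is a sum of characteristic functions of wedges. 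I would set $w_j = g_j e_1$ in the homogeneous coordinates and check that this signed cone identity, read modulo $\cL_V$, is precisely the homogeneous cocycle relation; the sign bookkeeping (matching $(-1)^i$ against the change in $\sign\det$ when a column is deleted) is the routine part. The $Q$-deformation is what guarantees we are always in the cleanest case of the cone identity: $c_Q$ replaces the possibly-degenerate or boundary-sensitive cone by a well-defined element of $\cK_V$, and the relation $d c_Q = 0$ in $\cK_V/\cL_V$ holds on the nose for the deformed cones.

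The main obstacle, and the step deserving the most care, is \emph{equivariance of the deformation}: in the cocycle relation the $i=0$ term is $\gamma \cdot \Psi_{CDG}(\ldots)(Q)$, which by the definition of the $\cN$-action equals $\gamma \cdot \big(\Psi_{CDG}(\ldots)(\gamma^{-1}Q)\big)$, so the various cone functions being summed are deformed by \emph{different} auxiliary vectors ($Q$ versus $\gamma^{-1}Q$). One must check that the cone additivity relation is robust enough to absorb this — essentially, that $\gamma \cdot c_{\gamma^{-1}Q}(\gamma^{-1}v_1,\ldots) = c_Q(v_1,\ldots)$ modulo $\cL_V$, using the $\GL(V)$-action \eqref{coneaction} on $\cK_V$ and the fact that $\cL_V$ is $\GL(V)$-stable. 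Granting this compatibility, the identity collapses to the single cone relation above. I would also note that the case where some $g_j e_1$ fail to be linearly independent is handled by the "$0$ otherwise" clause together with the observation that the corresponding deformed cones still satisfy the degenerate cone identity in $\cK_V/\cL_V$ (their contributions are wedges), so no separate case analysis is needed beyond remarking on it. This is in essence Theorem 1.6 of \cite{CDG}, and I would ultimately cite their argument for the detailed verification while recording the above as the structure of the proof.
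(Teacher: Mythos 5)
The paper offers no argument for this statement at all: it is imported verbatim as Theorem 1.6 of \cite{CDG}, so your concluding deferral to Charollois--Dasgupta--Greenberg is exactly what the paper does, and your outline (pointwise reduction in $Q$, signed cone identity modulo $\cL_V$, sign bookkeeping against $\sign\det$) is a fair summary of the structure of their proof. Two corrections to the sketch itself, though. First, the key geometric identity is misstated: for $w_0,\ldots,w_n$ in general position the signed sum $\sum_{i=0}^n(-1)^i\sign(\det(\cdots\widehat{w_i}\cdots))\,[C^o(w_0,\ldots,\widehat{w_i},\ldots,w_n)]$ is \emph{not} supported on the hyperplanes through the $w_j$; up to boundary terms it is either $0$ (when the origin is not in the convex hull of the $w_j$) or $\pm$ the characteristic function of essentially all of $V_\R$ (when it is). The reason the relation holds in $\cK_V/\cL_V$ is that this latter function is a sum of characteristic functions of wedges, hence lies in $\cL_V$; the role of the $Q$-deformation is precisely to fix the half-open boundary conventions so that the identity holds exactly at the level of functions, with no lower-dimensional error terms. ``Supported on hyperplanes, hence in $\cL_V$'' is a non sequitur in both directions and would fail in the second case.

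Second, the step you single out as the main obstacle --- compatibility of the deformation with the group action --- is in fact immediate from the definitions and requires no passage to $\cK_V/\cL_V$: since $\gamma^{-1}C^o(v_1,\ldots,v_n)=C^o(\gamma^{-1}v_1,\ldots,\gamma^{-1}v_n)$, one gets $\bigl(\gamma\cdot c_{\gamma^{-1}Q}(\gamma^{-1}v_1,\ldots,\gamma^{-1}v_n)\bigr)(w)=\sign(\det\gamma)\,c_Q(v_1,\ldots,v_n)(w)$ on the nose in $\cK_V$, and the sign is absorbed by the $\sign(\det(\alpha_1e_1\cdots\alpha_ne_1))$ prefactor in the definition of $\Psi_{CDG}$; this is what makes $\Psi_{CDG}$ a homogeneous cochain valued in $\cN(\cK_V)$, after which the whole content of the theorem is the cone identity above (you also conflate the inhomogeneous bar complex with the homogeneous cocycle relation you then write down, but that is only a presentational slip). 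With those repairs your outline is consistent with the argument in \cite{CDG} that both you and the paper ultimately cite.
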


For our purposes, Charollois-Dasgupta-Greenberg's cocycle has a definite advantage over Hill's cocycle: the support of the cone functions can be described precisely, at the cost of introducing the more complicated module $\cN$. We remark that the difference in modules makes comparing these cocycles difficult. Interestingly, the Charollois-Dasgupta-Greenberg cocycle is trivial on the mirabolic subgroup $\{ \alpha\in\GL_n(\Q) : \alpha e_1=e_1\}$, which is not the case with Hill's cocycle. 
\begin{prop}\label{P:CDG}
Let $\alpha_1,\ldots,\alpha_n\in\GL_n(\Q)$. If $\alpha_1e_1,\ldots,\alpha_ne_1$ are linearly independent, then, for all $Q\in V_{irr}$, $\Psi_{CDG}(\alpha_1,\ldots,\alpha_n)(Q)$ is the sum of characteristic functions of open cones generated by subsets of $\{\alpha_1e_1,\ldots,\alpha_ne_1\}$. Otherwise, $\Psi_{CDG}(\alpha_1,\ldots,\alpha_n)(Q)=0$.
\end{prop}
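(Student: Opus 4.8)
The statement has two halves. The ``otherwise'' half is immediate from the definition: if $\alpha_1 e_1, \ldots, \alpha_n e_1$ are not linearly independent then $\det(\alpha_1 e_1 \cdots \alpha_n e_1) = 0$, so the sign factor vanishes and $c_Q(\alpha_1 e_1, \ldots, \alpha_n e_1) = 0$ by the second case of the definition of the $Q$-deformed cone. The content is in the first half: when $v_i := \alpha_i e_1$ are linearly independent, I want to identify the $Q$-deformed cone function $c_Q(v_1, \ldots, v_n)$, modulo $\cL_V$, with an explicit sum of characteristic functions $[C^o(v_I)]$ where $I$ ranges over certain subsets of $\{1, \ldots, n\}$ and $v_I := \{v_i : i \in I\}$.

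First I would reduce to the standard basis. Since $v_1, \ldots, v_n$ are linearly independent, there is a unique $\gamma \in \GL_n(\Q)$ with $\gamma e_i = v_i$; using the $\GL(V)$-equivariance of $Q \mapsto c_Q$ (i.e. $\gamma \cdot c_Q(e_1, \ldots, e_n) = \sign(\det \gamma)\, c_{\gamma Q}(v_1, \ldots, v_n)$, which follows from the cone action \eqref{coneaction} and the definition of $c_Q$ as a one-sided limit) together with Lemma~\ref{L:G-action}-style bookkeeping, it suffices to treat $c_Q(e_1, \ldots, e_n)$ for a generic (irrational) $Q = (q_1, \ldots, q_n)$. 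Now $[C^o(e_1, \ldots, e_n)]$ is the characteristic function of the open positive orthant, and $c_Q(e_1,\ldots,e_n)(w) = \lim_{\varepsilon \to 0^+}[w + \varepsilon Q \text{ lies in the open orthant}]$. For $w = (w_1, \ldots, w_n)$, the $i$-th coordinate of $w + \varepsilon Q$ is $w_i + \varepsilon q_i$, which is eventually positive iff $w_i > 0$, or $w_i = 0$ and $q_i > 0$. Hence $c_Q(e_1, \ldots, e_n)$ is the characteristic function of the set $\{w : w_i \geq 0 \text{ for all } i,\ \text{and } w_i > 0 \text{ whenever } q_i < 0\}$. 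Writing $S = \{i : q_i > 0\}$ (a subset determined by $Q$, well-defined since $Q$ is irrational so no $q_i = 0$), this set is exactly the relatively-open face-union obtained by allowing the coordinates in $S$ to be $\geq 0$ and forcing the coordinates not in $S$ to be $> 0$.

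The remaining step is to decompose this ``half-open orthant'' as a sum of characteristic functions of genuinely open rational cones, modulo $\cL_V$. I would do this by induction on $|S|$: writing $T = \{1,\ldots,n\} \setminus S$, the half-open region is the disjoint union over subsets $U \subseteq S$ of the open cones $C^o(\{e_i : i \in U \cup T\})$ — indeed, specifying which of the ``optional'' coordinates in $S$ are strictly positive versus zero partitions the region into relatively open cones, each spanned by a subset of $\{e_1, \ldots, e_n\}$ containing $T$. So as an honest identity of $\Z$-valued functions,
\begin{equation}
	c_Q(e_1, \ldots, e_n) = \sum_{U \subseteq S} [C^o(\{e_i : i \in U \cup T\})],
\end{equation}
and pulling back by $\gamma$ gives $\Psi_{CDG}(\alpha_1, \ldots, \alpha_n)(Q)$ as a signed sum of characteristic functions of open cones generated by subsets of $\{v_1, \ldots, v_n\}$, as claimed. (One checks the signs match: $\gamma$ acts on $\cK_V$ with the factor $\sign(\det\gamma)$, which cancels against the $\sign(\det(\alpha_1 e_1 \cdots \alpha_n e_1)) = \sign(\det\gamma)$ prefactor in the definition of $\Psi_{CDG}$.)

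The main obstacle is not any single hard estimate but keeping the bookkeeping honest at two points: (i) verifying carefully that the one-sided limit defining $c_Q$ really does produce the ``half-open orthant'' indicator and not something that differs on a lower-dimensional set — this needs the observation that the limit is taken pointwise in $w$ and stabilizes for small $\varepsilon$; and (ii) confirming that the disjoint-union decomposition is an equality of functions on all of $V_\R$ (or at least on $V_\R \setminus \{0\}$, which is all that matters in $\cK_V$), including the behavior at $0$ and on boundary strata, so that no correction term in $\cL_V$ is actually needed for this particular identity. Since wedges $[\R v_1 + \R_+ v_2 + \cdots]$ are precisely the ``bad'' combinations we are allowed to kill, I would double-check that the decomposition above involves only genuinely pointed open cones and hence lives in $\cK_V$ already, so the statement holds on the nose and not merely modulo $\cL_V$.
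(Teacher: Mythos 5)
Your argument is correct in substance, but it takes a genuinely different route from the paper: the paper proves this proposition in one line by citing equation (7) of \cite{CDG}, which already lists exactly which open faces of the closed cone occur in $c_Q(\alpha_1e_1,\ldots,\alpha_ne_1)$, whereas you rederive that face decomposition from scratch by transporting to the standard orthant via $\gamma=(\,v_1\,\cdots\,v_n\,)$ and analyzing the one-sided limit coordinatewise. Your version is self-contained and more explicit (it names the subsets: all $I$ with $T\subseteq I\subseteq\{1,\ldots,n\}$, where $T$ is read off from the signs of the coordinates of $\gamma^{-1}Q$), at the cost of redoing a computation the paper outsources; the paper's citation buys brevity and consistency with the sign/orientation conventions of \cite{CDG}.

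Two slips should be repaired. First, $Q\in V_{irr}=\R^n-\Q^n$ only means \emph{some} coordinate is irrational, so you may not assert that no coordinate of $Q$ (or of $\gamma^{-1}Q$) vanishes. This is harmless: take $S=\{i: q_i'>0\}$ and demand strict positivity at every other index, since $w_i=0$, $q_i'=0$ also fails the limiting condition; but the parenthetical justification as written is false. Second, the sign remark at the end is backwards. From the plain function identity $c_Q(v_1,\ldots,v_n)(w)=c_{\gamma^{-1}Q}(e_1,\ldots,e_n)(\gamma^{-1}w)$ (no sign enters here) one gets $\Psi_{CDG}(\alpha_1,\ldots,\alpha_n)(Q)=\sign(\det\gamma)\sum_{U\subseteq S}[C^o(\{v_i:i\in U\cup T\})]$: the prefactor survives, so for $\det\gamma<0$ the value is \emph{minus} a sum of characteristic functions. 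Your claimed cancellation double-counts the sign: using the module action (\ref{coneaction}) one has $\Psi_{CDG}(\alpha_1,\ldots,\alpha_n)(Q)=\gamma\cdot c_{\gamma^{-1}Q}(e_1,\ldots,e_n)$, and applying $\gamma$ to each $[C^o(\{e_i:i\in I\})]$ reintroduces exactly one factor of $\sign(\det\gamma)$. This signed conclusion is at the same level of precision as the proposition itself and is all that the application (Theorem 3.4) requires, namely a $\Z$-linear combination of characteristic functions of open cones spanned by subsets of $\{\alpha_1e_1,\ldots,\alpha_ne_1\}$. Finally, when all coordinates of $\gamma^{-1}Q$ are positive your $U=\emptyset$ term is the single point $\{0\}$; note the decomposition need only hold on $V_\R-\{0\}$, where that term drops out, so no correction in $\cL_V$ is needed.
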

\begin{proof}
Equation (7) of \cite{CDG} describes which faces of $c_Q(\alpha_1e_1,\ldots,\alpha_n e_1)$ are included.
\end{proof}
\subsection{The $\Gamma_{f'}$-cocycle}
Fixing a test function $f'$, we let $\Gamma_{f'}\subset\SL_n(\Z)$ denote the stabilizer of $f'$. Note that $\Gamma_{f'}$ is an arithmetic subgroup, since it is the stabilizer of the test function $f'\otimes[L_p]$, and thus is commensurable with $\SL_n(\Z)$.
\begin{defn}
The improved $p$-adic Shintani cocycle for $\Gamma_{f'}$ is the composition
\begin{equation}
	\Phi_{f'}:\Gamma_{f'}^n\xrightarrow{\Psi_{CDG}}\cN(\cK_V/\cL_V)\xrightarrow{\langle ~, f'\otimes[L_p]\rangle}\cN(\widetilde{\cM}(\Z_p^n)/\Z\delta_0).
\end{equation}
By Lemma \ref{L:G-action}, $\Phi_{f'}$ is $\Gamma_{f'}$-equivariant. It satisfies the cocycle condition by Theorem 3.1 and is thus $n-1$ cocycle for $\Gamma_{f'}$ valued in $\cN(\widetilde\cM(\Z_p^n)/\Z\delta_0)$.
\end{defn}
\begin{thm}
If $f'$ satisfies the vanishing hypothesis for $e_1$, then $\Phi_{f'}(Q)$ is valued in the submodule of $p$-adic measures $\cM(\Z_p^n)/\Z\delta_0 \subset \widetilde{\cM}(\Z_p^n)/\Z\delta_0$ for all $Q\in V_{irr}$. In other words,
\begin{equation*}
	\Phi_{f'}\in Z^{n-1}(\Gamma_{f'},\cN(\cM(\Z_p^n)/\Z\delta_0)).
\end{equation*}	
\end{thm}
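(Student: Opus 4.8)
The plan is to reduce the statement to a direct application of Theorem~\ref{T:key_technical}. Fix $Q\in V_{irr}$ and fix group elements $\alpha_1,\ldots,\alpha_n\in\Gamma_{f'}$. The value $\Phi_{f'}(\alpha_1,\ldots,\alpha_n)(Q)$ is, by definition, the Solomon--Hu pairing $\langle \Psi_{CDG}(\alpha_1,\ldots,\alpha_n)(Q),\, f'\otimes[L_p]\rangle$, so it suffices to show this pairing lands in $\cM(\Z_p^n)/\Z\delta_0$. By Proposition~\ref{P:CDG}, the cone function $\Psi_{CDG}(\alpha_1,\ldots,\alpha_n)(Q)$ is zero unless $\alpha_1e_1,\ldots,\alpha_ne_1$ are linearly independent, in which case it is a $\Z$-linear combination of characteristic functions $[C^o(S)]$ of open cones $C^o(S)$ spanned by subsets $S\subseteq\{\alpha_1e_1,\ldots,\alpha_ne_1\}$. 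By bilinearity of the pairing, it is enough to treat a single such cone $C=C^o(\alpha_{i_1}e_1,\ldots,\alpha_{i_k}e_1)$, whose extremal rays are among the vectors $\alpha_j e_1$.

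By Theorem~\ref{T:key_technical}, the pseudo-measure attached to this simplicial cone $C$ is in fact a measure provided $f'$ satisfies the vanishing hypothesis for each extremal ray direction $\alpha_{i_1}e_1,\ldots,\alpha_{i_k}e_1$. So the crux is to upgrade the hypothesis ``$f'$ satisfies the vanishing hypothesis for $e_1$'' to ``$f'$ satisfies the vanishing hypothesis for $\alpha e_1$ for every $\alpha\in\Gamma_{f'}$.'' This is where the hypothesis $\alpha\in\Gamma_{f'}$ (i.e.\ $f'|\alpha=f'$) is used. Unwinding the definitions: the vanishing hypothesis for a vector $v$ asks that $h^{(p)}(f'_{v,w})=0$ for all $w\in V$, where $f'_{v,w}(t)=f'(w+vt)$. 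If $v=\alpha e_1$ and $w$ is arbitrary, then $f'_{\alpha e_1, w}(t)=f'(w+\alpha e_1 t)=(f'|\alpha^{-1})(\alpha^{-1}w + e_1 t)$ --- but since $\alpha^{-1}\in\Gamma_{f'}$ and $f'$ is $\Gamma_{f'}$-invariant, $f'|\alpha^{-1}=f'$, so $f'_{\alpha e_1, w}(t)=f'(\alpha^{-1}w + e_1 t)=f'_{e_1,\,\alpha^{-1}w}(t)$. Thus the one-variable slices of $f'$ in the direction $\alpha e_1$ coincide with the slices in the direction $e_1$ (reindexed by $w\mapsto\alpha^{-1}w$), and the vanishing hypothesis for $e_1$ immediately implies the vanishing hypothesis for every $\alpha e_1$ with $\alpha\in\Gamma_{f'}$.

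Combining: for any $\alpha_1,\ldots,\alpha_n\in\Gamma_{f'}$ and any $Q$, either $\Psi_{CDG}(\alpha_1,\ldots,\alpha_n)(Q)=0$ (and the pairing is trivially a measure), or it is a combination of cones whose extremal rays all lie in $\{\alpha_1e_1,\ldots,\alpha_ne_1\}$; $f'$ satisfies the vanishing hypothesis for each such ray by the previous paragraph, so Theorem~\ref{T:key_technical} shows each term pairs to a measure, and hence so does the sum. Therefore $\Phi_{f'}(\alpha_1,\ldots,\alpha_n)(Q)\in\cM(\Z_p^n)/\Z\delta_0$ for all arguments and all $Q\in V_{irr}$, which is the assertion that $\Phi_{f'}\in Z^{n-1}(\Gamma_{f'},\cN(\cM(\Z_p^n)/\Z\delta_0))$ (the cocycle property having already been established). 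The main obstacle is bookkeeping rather than conceptual: one must be careful that Theorem~\ref{T:key_technical} is stated for a cone with a \emph{given} set of extremal rays and that the decomposition in Proposition~\ref{P:CDG} genuinely produces cones all of whose extremal rays are drawn from the $\alpha_j e_1$ (no new ray directions are introduced by the deformation), and that passing to $f'\otimes[L_p]$ and working modulo $\Z\delta_0$ does not disturb the measure/pseudo-measure dichotomy --- all of which follow from the cited statements.
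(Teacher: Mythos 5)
Your proposal is correct and takes essentially the same route as the paper's proof: apply Proposition~\ref{P:CDG} to write $\Psi_{CDG}(\alpha_1,\ldots,\alpha_n)(Q)$ as a sum of characteristic functions of open cones with extremal rays among the $\alpha_j e_1$, upgrade the vanishing hypothesis from $e_1$ to all $\gamma e_1$ with $\gamma\in\Gamma_{f'}$, and conclude via Theorem~\ref{T:key_technical}; you merely spell out the equivariance step the paper asserts without proof. One cosmetic slip: with the convention $(f|\gamma)(v)=f(\gamma v)$ the correct rewriting is $f'(w+\alpha e_1 t)=(f'|\alpha)(\alpha^{-1}w+e_1 t)$ rather than $(f'|\alpha^{-1})(\alpha^{-1}w+e_1 t)$, but since $\alpha\in\Gamma_{f'}$ stabilizes $f'$ the conclusion $f'_{\alpha e_1,w}=f'_{e_1,\alpha^{-1}w}$ is unaffected.
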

\begin{proof}
If $f'$ satisfies the vanishing hypothesis for $e_1$, then it satisfies vanishing hypothesis  for $\gamma e_1$ for all $\gamma\in \Gamma_{f'}$. Proposition \ref{P:CDG} tells us that $\Psi_{CDG}(\alpha_1,\ldots,\alpha_n)(Q)$ is the sum of characteristic functions of open cones generated by rays in $\{\alpha_1,\ldots,\alpha_n\}$, and in particular rays for which $f'$ satisfies VH. By Theorem \ref{T:key_technical} $\Phi_{f'}(\alpha_1,\ldots,\alpha_n)(Q)=\langle\Psi_{CDG}(\alpha_1,\ldots,\alpha_n)(Q),f'\rangle$ is the sum of $p$-adic measures.
\end{proof}

\section{Appendix}
We prove Theorem 2.7 in several steps, first interpreting $h(f_{v,w})$ in terms of $q$-expansions of $G_{C,f}$. Moving to $p$-adic pseudo-measures, the vanishing hypothesis guarantees that the pseudo-measures don't have poles.
\begin{prop}\label{P:q-series}
Let $f$ be a test function, and suppose $C=C^o(v_1,\ldots,v_r)$ is an open cone. For each extremal ray $v_i$ and $w\in C^o(v_1,\ldots,\widehat{v_i},\ldots,v_r)$, 
\begin{equation}
	\text{The coefficient of $q^w$ in } (1-q^{v_i})G_{C,f}|_{q^v=1}=  h(f_{v_i,w})
\end{equation}
\end{prop}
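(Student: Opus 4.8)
The plan is to reindex the lattice-point sum defining $G_{C,f}$ along the $v_i$-direction, using that (after the reduction discussed in the preamble) the vectors $v_1,\dots,v_r$ are periods of $f$ lying in $L$, to sum the resulting geometric series, and then to read off Fourier coefficients after clearing the pole at $q^{v_i}=1$.

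First I would group the points of $C\cap L$ into $\langle v_i\rangle$-orbits. Writing a point of $C$ in coordinates $\sum_j a_jv_j$ (all $a_j>0$), each orbit has a unique representative $v^*$ with $0<a_i(v^*)\le 1$; equivalently, $v^*$ ranges over the ``first slab'' $S:=\{v\in C\cap L:\ v-v_i\notin C\}$, and every $v\in C\cap L$ equals $v^*+mv_i$ for a unique $v^*\in S$ and $m\ge 0$. Since $f(v^*+mv_i)=f(v^*)$ by periodicity, summing the geometric series in the $v_i$-direction on $\cH_C$ gives
\begin{equation*}
(1-q^{v_i})\,G_{C,f}=\sum_{v^*\in S}f(v^*)\,q^{v^*},
\end{equation*}
which is also what one obtains from (\ref{E:rational}) after multiplying by $1-q^{v_i}$ and expanding the remaining factors $(1-q^{v_j})^{-1}$, $j\ne i$. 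In particular the right-hand side is holomorphic along the hyperplane $\lambda(v_i)=0$.

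Next I would specialize $q^{v_i}=1$, i.e.\ restrict $\lambda$ to the hyperplane $\lambda(v_i)=0$: each monomial factors as $q^{v^*}=q^{a_i(v^*)v_i}\,q^{w(v^*)}$, where $w(v^*)$ is the projection of $v^*$ to the transverse space, and $q^{a_i(v^*)v_i}=e^{2\pi i\,a_i(v^*)\lambda(v_i)}$ becomes $1$ there --- genuinely $1$, not merely a root of unity, precisely because $\lambda(v_i)=0$. Hence
\begin{equation*}
(1-q^{v_i})\,G_{C,f}\big|_{q^{v_i}=1}=\sum_{v^*\in S}f(v^*)\,q^{w(v^*)},
\end{equation*}
a $q$-series in the transverse variables. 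Now fix $w\in C^o(v_1,\dots,\widehat{v_i},\dots,v_r)$ and collect the monomials with $w(v^*)=w$: the corresponding $v^*$ are exactly the points $w+tv_i$ with $t\in(0,1]\cap\Q$ (points off the relevant lattice contribute $0$ through $f$), so the coefficient of $q^w$ equals $\sum_{t\in(0,1]\cap\Q}f(w+tv_i)=\sum_{t\in(0,1]\cap\Q}f_{v_i,w}(t)$. Since $v_i$ is a period of $f$, the slice $f_{v_i,w}$ is $\Z$-periodic, and for a $\Z$-periodic test function the sum over a set of representatives of $\Q/\Z$ is precisely $h(f_{v_i,w})$ by the defining property of $h$; this gives the claim.

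I expect the only real work to be in the specialization step: checking that the ``fractional'' monomials $q^{a_i(v^*)v_i}$ behave correctly on $\{\lambda(v_i)=0\}$, that the resulting object is a bona fide $q$-series with unambiguous $q^w$-coefficients, and --- if one prefers to argue from (\ref{E:rational}) directly rather than from the slab description --- that identifying the $q^w$-coefficient with $\sum_{t\in(0,1]\cap\Q}f_{v_i,w}(t)$ uses periodicity of $f$ in the directions $v_j$, $j\ne i$, and not merely in the $v_i$-direction.
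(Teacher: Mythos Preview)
Your proposal is correct and follows essentially the same route as the paper: both arguments use the $v_i$-periodicity of $f$ to collapse the sum defining $G_{C,f}$ onto the slab $\{0<a_i\le 1\}$ (you phrase this as picking orbit representatives and summing a geometric series, the paper as a telescoping difference), then specialize $q^{v_i}=1$ and identify the resulting coefficient of $q^w$ with $\sum_{t\in(0,1]\cap\Q}f_{v_i,w}(t)=h(f_{v_i,w})$. The only cosmetic difference is that the paper writes out the telescoping explicitly rather than invoking the geometric-series summation.
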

\begin{proof}
For notational convenience, we assume without loss of generality that $i=1$. Taking the $q$-expansion of $G_{C,f}$ gives
\begin{align}
	(1-q^{v_1})G_{C,f} = (1-q^{v_1})\sum_{v\in C} f(v)q^v\\
	=(1-q^{v_1})\sum_{x_1,\ldots,x_n>0}f(x_1v_1+\cdots+x_rv_r) q^{x_1v_1+\cdots x_rv_r}\\
	=\sum_{x_1,\ldots,x_n>0}f(x_1v_1+\cdots+x_rv_r) q^{x_1v_1+\cdots x_rv_r}-\sum_{x_1,\ldots,x_n>0}f(x_1v_1+\cdots+x_rv_r) q^{(x_1+1)v_1+\cdots x_rv_r}\\
\end{align}
Since $f$ is periodic with respect to $v_1$, $f(x_1v_1+\cdots x_nv_n)=f((x_1+1)v_1+\cdots+x_nv_n)$, and the differences of sums telescopes to
\begin{equation}
\sum_{x_2,\ldots,x_n>0} \sum_{x_1\in (0,1]\cap\Q} f(x_1v_1+\cdots +x_r v_r)q^{x_1v_1}q^{x_2v_2+\cdots+x_rv_r}.
\end{equation}
Evaluating at $q^{v_1}=1$ (i.e. restricting the meromorphic function $G_{C,f}$ to linear functionals vanishing on $v_1$), we get
\begin{align}
	(1-q^{v_1})G_{C,f}|_{q^{v_1}=1}=\sum_{x_2,\ldots,x_n} \sum_{x_1\in(0,1]\cap \Q} f(x_1v_1 +(x_2v_2+\cdots+x_nv_n)) q^{x_2v_2+\cdots+x_nv_n}\\
	=\sum_{w\in C^o(v_2,\ldots,v_n)} \left(\sum_{x_1\in (0,1]\cap\Q} f(x_1v_1 +w)\right)q^w\\
	=\sum_{w\in C^o(v_2,\ldots,v_n)}\left(\sum_{x_1\in (0,1]\cap\Q} f_{v_1,w}(x_1)\right)q^w
\end{align}
Since $v_1$ is a period of $f$, $f_{v_1,w}\in\cS(\Q)$ has $1$ as a period. The term in the parenthesis is exactly the Haar measure of $f_{v_1,w}$ (normalized with respect to $\Z)$. Indeed, if $P$ is a period lattice of $f\in\cS(V)$, then 
\begin{equation}
	h:(f) = \frac{1}{[L:P]}\sum_{v\in V/L} f(v).
\end{equation}
See, for example, \cite{Ste89}. Taking $L=\Z$, $(0,1]\cap\Q$ is a set of representatives of $\Q/L$, giving
\begin{equation}
	(1-q^{v_1})G_{C,f}|_{q^{v_1}=1}=\sum_{w\in C^o(v_2,\ldots,v_n)} h(f_{v_1,w})q^w
\end{equation}
and hence the result.
\end{proof}

\begin{prop}\label{P:easy-measure}
Let $v_1,\ldots,v_n\in L$ be linearly independent, and put $U_p=\Z_pv_1+\cdots \Z_p v_n\subset L_p$ and $\kappa=[C^o(v_1,\ldots,v_r)]$ for some $1\leq r\leq n$. The pseudo-measure $\mu_{\kappa,f'\otimes[U_p]}=\langle\kappa,f'\otimes[U_p]\rangle \in \widetilde{\cM}(L_p)$ is a measure if and only if $f'$ satisfies the vanishing hypothesis for $v_1,\ldots,v_r$. 
\end{prop}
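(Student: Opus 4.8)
The plan is to reduce the general lattice statement to the special case of Proposition~\ref{P:q-series} by exploiting the product structure of the test function and a change-of-coordinates argument, then to pass from the archimedean $q$-series description to the $p$-adic pseudo-measure. Concretely, since $v_1,\ldots,v_n$ form a $\Q$-basis of $V$ that (after clearing denominators) we may assume lies in $L$, I would first treat the sublattice $L' = \Z v_1 + \cdots + \Z v_n \subseteq L$ of finite index, where the cone $C = C^o(v_1,\ldots,v_r)$ is generated by a basis of $L'$. On $L'$ the formula \eqref{E:pseudo-measure} applies verbatim: $\langle\kappa, f'\otimes[U_p]\rangle = \prod_{i=1}^r (1-\delta_{v_i})^{-1}\sum_{v\in\cP}(f'\otimes[U_p])(v)\,\delta_v$, where $\cP = \{\sum x_i v_i : x_i\in(0,1]\cap\Q\}$ and now $U_p = \Z_p v_1 + \cdots + \Z_p v_n$ is the full $p$-adic completion. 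The question of whether this pseudo-measure is a measure is exactly the question of whether the power series (under Amice transform, in coordinates $T_i = \delta_{v_i}-1$) has no pole along each hyperplane $T_i = 0$, i.e. whether $(1-\delta_{v_i})$ divides the numerator for each $i$.

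The key step is to identify, for each fixed $i$, the ``residue along $T_i=0$'' of the numerator with the quantity $h^{(p)}(f'_{v_i,w})$. This is precisely the $p$-adic analogue of Proposition~\ref{P:q-series}: after multiplying by $(1-\delta_{v_i})$ and setting $\delta_{v_i}=1$, one telescopes the sum over the $v_i$-direction using the $v_i$-periodicity of $f'\otimes[U_p]$, and the surviving coefficient of $\delta_w$ (for $w\in C^o(v_1,\ldots,\widehat{v_i},\ldots,v_r)$, landing in the quotient lattice) is $\sum_{x_i\in(0,1]\cap\Q}(f'\otimes[U_p])_{v_i,w}(x_i) = h((f'\otimes[U_p])_{v_i,w})$. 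One checks that this Haar-measure factor equals $h^{(p)}(f'_{v_i,w})$ times the local Haar mass at $p$ of $[U_p]$ restricted to the $v_i$-line through $w$, which is a nonzero constant; hence the residue vanishes for all $w$ if and only if $h^{(p)}(f'_{v_i,w}) = 0$ for all $w$, i.e. iff $f'$ satisfies the Vanishing Hypothesis for $v_i$. Running this over all $i = 1,\ldots,r$ gives: $\mu_{\kappa, f'\otimes[U_p]}$ has no poles $\iff$ it is a measure $\iff$ VH holds for $v_1,\ldots,v_r$. (The implication ``measure $\Rightarrow$ no poles'' is immediate; ``no poles $\Rightarrow$ measure'' uses that $\cM(L'_p)$ is exactly the subring of $\widetilde{\cM}(L'_p) = \cM(L'_p)[(1-\delta_v)^{-1}]$ of elements whose Amice transforms are honest power series, together with the fact that the only possible denominators are the $(1-\delta_{v_i})$.)

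Finally I would descend from $L'$ to $L$. Since $[L:L']$ is finite and prime to nothing in particular, one has a trace/pushforward map $\widetilde{\cM}(L'_p)\to\widetilde{\cM}(L_p)$ compatible with the Solomon--Hu pairing, and it carries measures to measures and reflects the no-pole condition; alternatively, one observes that the statement for $L$ with test function $f'\otimes[U_p]$ (where now $U_p\subseteq L_p$ need not be all of $L_p$) follows because $[U_p]$ is itself built from characteristic functions of cosets of $\Z_p v_1+\cdots+\Z_p v_n$, and the VH condition for $f'$ is unchanged. The main obstacle is making the $p$-adic ``evaluate at $\delta_{v_i}=1$ / extract residue'' operation rigorous: unlike the archimedean case where $G_{C,f}$ is a genuine meromorphic function one may restrict, here one must argue entirely inside the ring $(\Q[q^v]_{v\in L'})_{(1-q^v)}$ and its image in $\widetilde{\cM}(L'_p)$, tracking that $\varphi_{L',p}$ is injective enough on the relevant elements for the divisibility statement over $\Q[q^v]$ to transfer to non-existence of poles in the Amice ring — this is where the careful bookkeeping of Proposition~\ref{P:q-series} does the real work, and the rest is formal.
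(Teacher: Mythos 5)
Your overall route is the same as the paper's: reduce to the sublattice $U_p=\Z_p v_1+\cdots+\Z_p v_n$ (the paper uses the inclusion $\cM(U_p)\hookrightarrow\cM(L_p)$ dual to restriction of continuous functions, which is the same map as your ``pushforward'' from $L'$), apply the Amice transform in the coordinates attached to $v_1,\ldots,v_n$, note that absence of a pole along $T_i=0$ is detected by evaluating at $T_i=0$, and identify the resulting coefficients with $h^{(p)}(f'_{v_i,w})$ times a nonzero local factor at $p$ via the telescoping of Proposition \ref{P:q-series}. Your concern about making ``set $\delta_{v_i}=1$'' rigorous is resolved exactly as you suggest: one works with the power series and sets $T_i=0$.

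There is, however, one genuine gap as written: you invoke \eqref{E:pseudo-measure} ``verbatim'', with denominators $1-\delta_{v_i}$ and fundamental domain $x_i\in(0,1]$, and later telescope ``using the $v_i$-periodicity of $f'\otimes[U_p]$''. But $v_i$ is a period of $[U_p]$ only; it need not be a period of $f'$, so \eqref{E:pseudo-measure} does not apply with the $v_i$ themselves. The paper's fix is to choose positive rationals $\alpha_i$ that are $p$-adic units (possible because the period group of $f'$ is open away from $p$, so suitable integers prime to $p$ work) such that the $\alpha_i v_i$ are periods of $f'\otimes[U_p]$, and to use denominators $1-\delta_{\alpha_i v_i}$ and fundamental domain $x_i\in(0,\alpha_i]$. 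This is not cosmetic: after Amice the denominator becomes $(1+T_i)^{\alpha_i}-1$, and it is precisely the unit condition $\alpha_i\in\Z_p^\times$ that gives the equality of ideals $((1+T_i)^{\alpha_i}-1)=(T_i)$, so that ``no pole'' is equivalent to vanishing at $T_i=0$; if one were forced to take $\alpha_i$ divisible by $p$, evaluation at $T_i=0$ would no longer detect divisibility. Alternatively you may rescale each $v_i$ by such an $\alpha_i$ at the outset---this changes neither the cone $\kappa$, nor $U_p$, nor the vanishing hypothesis---but the point must be addressed. With that repair, your argument coincides with the paper's proof.
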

\begin{proof}
Restriction of continuous functions from $L_p$ to $U_p$ induces an inclusion $\cM(U_p)\hookrightarrow\cM(L_p)$. Writing $\widetilde\cM(U_p)$ for the localization of $\cM(U_p)$ with respect to $\langle 1-\delta_v : v\in U_p, v\neq 0\rangle$, we have an inclusion $\widetilde\cM(U_p)\hookrightarrow\widetilde\cM(L_p)$. We first note that $\mu:=\mu_{\kappa,f'\otimes[U_p]}$ is an element of the image of $\widetilde{\cM}(U_p)$: Since $[U_p]$ is periodic with respect to $v_1,\ldots,v_r$, there exist $p$-adic units $\alpha_1\ldots,\alpha_r\in \Z_p^\times$ such that $f:=f'\otimes[U_p]$ is periodic with respect to $\alpha_1 v_1\ldots,\alpha_r v_r$. Therefore, 
\begin{equation}
	\mu_{\kappa,f} = \frac{1}{1-\delta_{\alpha_1v_1}}\cdots\frac{1}{1-\delta_{\alpha_rv_r}}\sum_{v\in\cP} f(v)\delta_v,
\end{equation}
where $\cP:=\{ \sum_{i=1}^r x_i v_i : x_i\in (0,\alpha_i]\cap\Q\}.$ Since $f$ vanishes at $v\not\in U_p$, each term belongs to $\widetilde{\cM}(U_p)$. Therefore it suffices to show that $\mu_{\kappa,f}$, viewed as an element of $\widetilde{\cM}(U)$, is a measure exactly when $f'$ satisfies the vanishing hypothesis for $v_1,\ldots,v_r$.

The basis $v_1,\ldots,v_n$ identifies $U_p$ with $\Z_p^n$ and the Amice transform $\cA:\cM(U_p)\xrightarrow{~\sim~}\Z_p[[T_1,\ldots,T_n]]\otimes_{\Z_p}\Q_p$ is defined by
\begin{align}
	\cA(\mu)=
	\sum_{k_1,\ldots,k_n\geq 0} \left(\int_{\Z_p^n} {x_1 \choose k_1}\cdots {x_n\choose k_n} d\mu(x_1v_1+\cdots x_nv_n)\right)T_1^{k_1}\cdots T_n^{k_n}\\
	=\int_{\Z_p^n} (1+T_1)^{x_1}\cdots(1+T_n)^{x_n}d\mu(x_1v_1+\cdots x_n v_n).
\end{align}
Under this identification $\cA(\delta_{v_i})=1+T_i$ and a standard computation shows $\cA(\delta_{\alpha v_i})=(1+T_i)^{\alpha}$ for each $\alpha\in\Z_p$.   Therefore, $\mu_{\kappa,f'\otimes[U_p]}$ is a measure if and only if $(1+T_i)^{\alpha_i}-1$ divides $\sum_{v\in \cP} f(v)\cA(\delta_v)$. Now $\frac{(1+T_i)^{\alpha_i}-1}{T}$ is a unit in $\Z_p[[T_1,\ldots,T_n]]$, so the ideal $((1+T_i)^{\alpha_i}-1)$ is equal to the ideal $(T_i)$. A power series $F(T_1,\ldots,T_n)$ belongs to this ideal if and only if $F(T_1,\ldots,T_n)|_{T_i=0}=0$. To apply this to our pseudo-measures, observe that evaluating the Amice transform of $(1-\delta_{\alpha_iv_i})\mu_{\kappa,f'\otimes[U_p]}$ at $T_i=0$ corresponds to restricting the meromorphic function $(1-q^{v_i})G_{C,f}$ to linear functionals vanishing at $v_i$. Proposition \ref{P:q-series} tell us the coefficients of the $q$-expansion of $(1-q^{v_i})G_{C,f}|_{q^{v_i}=1}$ are given by $h(f_{v_i,w})=h^{(p)}(f'_{v_i,w})h_p([\Z_p])$. It follows that $\cA((1-\delta_{\alpha_iv_i})\mu_{\kappa,f})|_{T_i=0}$  vanishes exactly when $f'$ satisfies the vanishing hypothesis for $v_i$. We conclude $\mu_{\kappa,f'\otimes[U_p]}$ is a measure if and only if $f'$ satisfies the vanishing hypothesis for $v_1,\ldots,v_r$.
\end{proof}

\begin{cor}
With the same notation as above, suppose furthermore that $u\in L_p$. Then $\mu_{\kappa,f'\otimes[u+U_p]}=\langle \kappa,f'\otimes[u+U_p]\rangle$ is a measure if and only if $f'$ satisfies the vanishing hypothesis for $v_1,\ldots,v_r$.
\end{cor}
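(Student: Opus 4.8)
The plan is to reduce the corollary to Proposition \ref{P:easy-measure} by a translation argument, exploiting the fact that the cone function $\kappa$ is unchanged under the relevant bookkeeping and that the vanishing hypothesis for $v_1,\ldots,v_r$ is, by the remark following the Vanishing Hypothesis definition, insensitive to translation. First I would observe that $[u+U_p]$ is still periodic with respect to $v_1,\ldots,v_r$ (since $U_p=\Z_pv_1+\cdots+\Z_pv_n$ is), so exactly as in the proof of Proposition \ref{P:easy-measure} there are $p$-adic units $\alpha_1,\ldots,\alpha_r\in\Z_p^\times$ with $f:=f'\otimes[u+U_p]$ periodic with respect to $\alpha_1v_1,\ldots,\alpha_rv_r$, and hence
\begin{equation*}
	\mu_{\kappa,f} = \frac{1}{1-\delta_{\alpha_1v_1}}\cdots\frac{1}{1-\delta_{\alpha_rv_r}}\sum_{v\in\cP} f(v)\delta_v,
\end{equation*}
with $\cP$ as before. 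Since $f$ is supported on the coset $u+U_p$, each $\delta_v$ appearing lies in $\delta_u\cdot\cM(U_p)$ (convolution by the invertible measure $\delta_u$), so $\mu_{\kappa,f}$ lies in $\delta_u\cdot\widetilde{\cM}(U_p)$, and it is a measure if and only if $\delta_{-u}\ast\mu_{\kappa,f}=\mu_{\kappa,f'\otimes[U_p]}$ is a measure, because $\delta_u$ and $\delta_{-u}$ are units in $\cM(L_p)$ that preserve $\cM(L_p)$.

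Then I would apply Proposition \ref{P:easy-measure} directly to $f'\otimes[U_p]$: that proposition says $\mu_{\kappa,f'\otimes[U_p]}$ is a measure if and only if $f'$ satisfies the vanishing hypothesis for $v_1,\ldots,v_r$. Combining this with the equivalence just established gives the corollary. The one point that needs a sentence of care is the claim that $\delta_{-u}\ast\mu_{\kappa,f'\otimes[u+U_p]}$ equals $\mu_{\kappa,f'\otimes[U_p]}$ rather than merely being an associate of it: this follows because translation by $-u$ sends the support $u+U_p$ to $U_p$ and multiplies each $q^v=\delta_v$ by $\delta_{-u}$, matching the $q$-expansion definition of $G_{C,f}$ term by term, together with the fact that the denominators $1-\delta_{\alpha_iv_i}$ are untouched since $\alpha_iv_i\in U_p$.

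Alternatively, and perhaps more cleanly, one can avoid the convolution bookkeeping by invoking the Amice-transform argument of Proposition \ref{P:easy-measure} verbatim: $\mu_{\kappa,f}$ is a measure iff $(T_i)$ divides $\sum_{v\in\cP}f(v)\cA(\delta_v)$ for each $i$, iff that sum vanishes at $T_i=0$ for each $i$, and Proposition \ref{P:q-series} identifies the coefficients of the corresponding $q$-series with the Haar integrals $h(f_{v_i,w})=h^{(p)}(f'_{v_i,w})h_p([u+U_p]_{v_i,w})$; since $[u+U_p]_{v_i,w}$ is a nonzero multiple of a characteristic function of a coset in $\Q_p$, its Haar measure is a positive constant, so these coefficients vanish for all $w$ precisely when $h^{(p)}(f'_{v_i,w})=0$ for all $w$, i.e.\ when $f'$ satisfies the vanishing hypothesis for $v_i$.

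I do not expect a genuine obstacle here; the content is already contained in Propositions \ref{P:q-series} and \ref{P:easy-measure}, and the corollary is essentially the observation that nothing in those arguments used that the support of the $p$-part of the test function was a lattice rather than a coset. The only mild subtlety is ensuring that translating the support does not disturb the periods used to clear denominators (it does not, since the periods $\alpha_iv_i$ lie in $U_p$, whose cosets are all $v_i$-periodic) and that the Haar measure of the restricted function $[u+U_p]_{v_i,w}$ is nonzero so that the vanishing-hypothesis equivalence is unaffected; both are immediate.
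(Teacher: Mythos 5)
Your overall strategy---factor out a Dirac $\delta_u$, use that $\delta_{\pm u}$ are units preserving measures, and invoke translation invariance of Haar---is the same as the paper's, but the identity on which your reduction rests is false. Convolution by $\delta_{-u}$ does not just ``send the support $u+U_p$ to $U_p$'': it replaces $\sum_{v}\kappa(v)f'(v)[u+U_p](v)\delta_v$ by $\sum_{w}\kappa(w+u)\,f'(w+u)\,[U_p](w)\,\delta_w$, i.e.\ it also translates the arguments at which the away-from-$p$ function $f'$ is evaluated and translates the cone. So $\delta_{-u}\ast\mu_{\kappa,f'\otimes[u+U_p]}$ is \emph{not} $\mu_{\kappa,f'\otimes[U_p]}$; what appears is the translated test function $g'(\cdot)=f'(\cdot+u)$ summed over a translated region, which is exactly why the paper's proof introduces $g'$, notes that VH for $g'$ is equivalent to VH for $f'$ by translation invariance of Haar, and reruns the argument of Proposition \ref{P:easy-measure} over the translated fundamental domain $u+\cP$. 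Concretely, take $n=r=1$, $L=\Z$, $v_1=p$ (so $U_p=p\Z_p$), $u=1$, and $f'=[2\widehat{\Z}^{(p)}]$ with $p$ odd: then $\delta_{-1}\ast\mu_{\kappa,f'\otimes[1+p\Z_p]}=\delta_{p}(1-\delta_{2p})^{-1}$ while $\mu_{\kappa,f'\otimes[p\Z_p]}=\delta_{2p}(1-\delta_{2p})^{-1}$, and these differ by the nonzero measure $\delta_p(1+\delta_p)^{-1}$ (also nonzero modulo $\Z\delta_0$). The biconditional you want happens to be true, but your ``term-by-term matching of $q$-expansions'' does not establish it.

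Your alternative route has the same gap in a different guise. The Amice-transform criterion in Proposition \ref{P:easy-measure} is carried out inside $\cM(U_p)$, in the coordinates $v_1,\ldots,v_n$ where $\cA(\delta_{v_i})=1+T_i$, and it applies only because $\mu_{\kappa,f'\otimes[U_p]}$ is supported on $U_p$. The pseudo-measure $\mu_{\kappa,f'\otimes[u+U_p]}$ is supported on the coset $u+U_p$, so it does not lie in $\widetilde{\cM}(U_p)$ and the argument cannot be applied ``verbatim'': in the example above the numerator is $\delta_{1+p}$, whose exponent in the $v_1$-coordinate is $(1+p)/p\notin\Z_p$, so $\cA(\delta_v)$ is not even defined there; and if one instead works in $\cM(L_p)\cong\Z_p[[T]]\otimes\Q_p$, the ideal $(1-\delta_{\alpha_iv_i})$ is no longer $(T_i)$ when the $v_i$ span a proper sublattice (e.g.\ $1-\delta_{2p}=1-(1+T)^{2p}$), so ``evaluate at $T_i=0$'' is not the correct divisibility test. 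The step that makes the coset case work is precisely the $\delta_u$-factorization together with the translation of $f'$ and of the fundamental domain that your write-up elides; once that is inserted (as in the paper), your remarks about the positivity of the $p$-part of the Haar factor and the translation invariance of VH do finish the argument (with the caveat that $[u+U_p]_{v_i,w}$ can be identically zero for some $w$, so it is not always ``a nonzero multiple of a characteristic function of a coset'').
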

\begin{proof}
We have
\begin{align}
	\langle \kappa, f'\otimes[u+U_p]\rangle =\frac{1}{1-\delta_{v_1}}\cdots\frac{1}{1-\delta_{v_r}} \sum_{v\in\cP} f'(v)[u+U_p](v) \delta_v\\
	=\frac{1}{1-\delta_{v_1}}\cdots\frac{1}{1-\delta_{v_r}} \sum_{v\in\cP} f'(v)[U_p](v-u) \delta_v.
\end{align}
Write $g'$ for the test function $g'(v-u)=f'(v)$. Note that, by the translation invariance of Haar, $g'$ satisfies the vanishing hypothesis for $v_1\ldots,v_r$ if and only if $f'$ does. Now $\langle \kappa,f'\otimes[u+U_p]\rangle$ becomes
\begin{align}
	\frac{1}{1-\delta_{v_1}}\cdots\frac{1}{1-\delta_{v_r}} \sum_{v\in\cP} g'(v-u)[U_p](v-u) \delta_v\\
	=\frac{1}{1-\delta_{v_1}}\cdots\frac{1}{1-\delta_{v_r}} \sum_{v\in \cP} g'\otimes[U_p](v-u)\delta_u\delta_{v-u}\\
	=\delta_u\left(\frac{1}{1-\delta_{v_1}}\cdots\frac{1}{1-\delta_{v_r}}\sum_{ w\in u+\cP} g'\otimes[U_p](w)\delta_w\right).
\end{align}
Since $u+\cP$ is a fundamental domain for the action of $U_p$, the proof of Proposition \ref{P:easy-measure} shows the term in the parentheses is a measure if and only if $g'$ (and hence $f'$) satisfies the vanishing hypothesis for $v_1,\ldots,v_r$. Therefore, $\langle \kappa,f'\otimes[u+U_p]\rangle$ is the convolution of a measure (and itself a measure) if and only if $f'$ satisfies the vanishing hypothesis for $v_1,\ldots,v_r$.
\end{proof}

Now we turn to the proof of the main theorem
\begin{proof}[Proof of Theorem 2.7]
By scaling $v_1,\ldots,v_r$ by positive rationals, we may assume without loss of generality that $v_1,\ldots,v_r\in L$. Extending, if necessary, $v_1,\ldots,v_r$ to a basis of $V$, we again put $U_p=\Z_pv_1+\cdots \Z_p v_n\subset L_p$. The lattice $U_p$ has finite index in $L_p$, so there exists a finite number of vectors $\{u_i\}$ such that $[L_p]=\sum_{i=1}^{[L_p:U_p]}[u_i+U_p]$. It follows that $\mu_{\kappa,f'}=\sum_{i=1}^{[L_p:U_p]}\mu_{\kappa,f'\otimes[u_i+U_p]}$. By the corollary, each of these pseudo-measures is a measure exactly when $f'$ satisfies the vanishing hypothesis for $v_1,\ldots,v_r$. In this case, $\mu_{\kappa,f'}$ is a measure.
\end{proof}

%
%
%
%
%
%
%
%

\providecommand{\bysame}{\leavevmode\hbox to3em{\hrulefill}\thinspace}
\providecommand{\MR}{\relax\ifhmode\unskip\space\fi MR }
\providecommand{\MRhref}[2]{%
  \href{http://www.ams.org/mathscinet-getitem?mr=#1}{#2}
}
\providecommand{\href}[2]{#2}

\end{document}